\renewcommand{\leq}{\leqslant}
\renewcommand{\geq}{\geqslant}
\DeclareMathOperator{\BrAut}{Aut_{Br}}
\DeclareMathOperator{\Cen}{Z}
\DeclareMathOperator{\Fix}{Fix}
\DeclareMathOperator{\id}{id}
\DeclareMathOperator{\Ker}{Ker}
\DeclareMathOperator{\Ret}{Ret}
\DeclareMathOperator{\Soc}{Soc}
\DeclareMathOperator{\Sym}{Sym}
\newcommand{\Aut}{\operatorname{Aut}}
\newcommand{\GAP}{\textsf{GAP}}
\numberwithin{equation}{section}
\numberwithin{figure}{section}
\numberwithin{table}{section}
\newtheorem{thm}{Theorem}[section]
\newtheorem{lem}[thm]{Lemma}
\newtheorem{cor}[thm]{Corollary}
\newtheorem{pro}[thm]{Proposition}
\theoremstyle{definition} 
\newtheorem{defn}[thm]{Definition}
\newtheorem{rem}[thm]{Remark}
\newtheorem{exa}[thm]{Example}
\begin{document}

\title{Factorizations of skew braces}

\author{E. Jespers, {\L}. Kubat, A. Van Antwerpen, L. Vendramin}

\address[E. Jespers, {\L}. Kubat, A. Van Antwerpen]{Department of Mathematics, Vrije Universiteit Brussel, Pleinlaan 2, 1050 Brussel}
\email{eric.jespers@vub.be}
\email{lukasz.kubat@vub.be}
\email{arne.van.antwerpen@vub.be}

\address[L. Vendramin]{IMAS--CONICET and Departamento de Matem\'atica, FCEN, Universidad de Buenos Aires, Pabell\'on~1,
Ciudad Universitaria, C1428EGA, Buenos Aires, Argentina; and NYU-ECNU Institute of Mathematical Sciences at NYU Shanghai,
3663 Zhongshan Road North, Shanghai, 200062, China}
\email{lvendramin@dm.uba.ar}

\subjclass[2010]{Primary:16T25; Secondary: 81R50}
\keywords{Yang--Baxter equation, solution, skew brace, factorization}

\begin{abstract}
We introduce strong left ideals of skew braces and prove that they produce non-trivial decomposition of set-theoretic solutions
of the Yang--Baxter equation. We study factorization of skew left braces through strong left ideals and we prove analogs
of It\^{o}'s theorem in the context of skew left braces. As a corollary, we obtain applications to the retractability problem
of involutive non-degenerate solutions of the Yang--Baxter equation. Finally, we classify skew braces that contain
no non-trivial proper ideals. 
\end{abstract}

\maketitle


\section*{Introduction}

An important family of finite set-theoretic solutions of the Yang--Baxter equation is that of involutive non-degenerate multipermutation 
solutions. Such solutions first appeared in the work~\cite{MR1722951} of Etingof, Schedler, and Soloviev as generalizations of Lyubachenko's permutation
solutions. Now, these solutions appear in many different contexts: it is known, for example, that a finite involutive non-degenerate solution is 
multipermutation if and only if its structure group is left orderable~\cite{MR3815290,MR3572046,MR2189580}. The structure group of a finite solution
has a particularly important finite quotient known as the permutation group of the solution. It is natural to ask which group-theoretical properties of 
permutation groups detect multipermutation solutions. There are strong results in this
directions~\cite{MR2652212,MR3861714,MR2885602,MR3935814,MR2278047,MR3814340}. However, to understand multipermutation solutions it is not enough to 
know the group structure of their permutation groups. One really needs to understand deeply the permutation group of a solution, meaning that one needs 
to understand this as a brace. Braces (and more generally skew braces) are generalizations of radical rings that turn out to provide the right
algebraic framework to study set-theoretic solutions of the Yang--Baxter equation~\cite{MR3647970,MR2278047}. Skew braces are intensively studied, as 
they are known to have connections to several different topics~\cite{MR3835326,MR3177933,MR3834774,MR3291816,MR3763907}. The structure group and the 
permutation group of a solution are examples of skew braces. So these groups are not merely groups: there is some deep ring-theoretic information
hidden behind these groups associated with set-theoretic solutions. 

In this work, we study involutive non-degenerate (multipermutation) solutions by means of the skew brace structure of their permutation groups and
via factorizations of this skew brace. Our first main result is a sort of analog of It\^{o}'s celebrated theorem on metabelian groups, but now in the 
context of skew braces. As an application, we understand better involutive non-degenerate (multipermutation) solutions. We also show that in some sense 
our It\^{o}'s theorem cannot be improved naively, meaning that we cannot expect a result similar to that of Kegel--Wielandt on products of nilpotent 
groups being solvable. It is interesting to remark that recently Sysak asked about extending the results of factorization of groups to skew braces.
We know precisely which is the setting in which the results of factorization can be studied in the skew brace situation. This is why we introduce
strong left ideals. We show that these strong left ideals are related to non-trivial decomposability of solutions. Decomposable solutions also were 
introduced by Etingof, Schedler, and Soloviev in~\cite{MR1722951}, the same paper where multipermutation solutions were first considered. With concrete 
examples, we show that our method for studying multipermutation solutions gives deeper insights. Indeed, there are solutions where group theory is not 
enough to detect multipermutability and where this property can be easily recognized by means of the brace factorization. 

As another application of skew brace factorization we obtain several results on the structure of skew left braces. In the final main result
we consider characteristic ideals of skew left braces and we characterise skew left braces that are characteristically simple. They turn out
to be factorizable into copies of a simple skew left subbrace.

The paper is organized as follows. In Section~\ref{strong} we review some basics on skew left braces and we introduce strong left ideals. In 
Proposition~\ref{pro:decomposable} we show that strong left ideals yield non-trivial decomposable solutions of the Yang--Baxter equation.
In Section~\ref{factorizations} we introduce factorizations of skew left braces and prove our main results in Theorems~\ref{thm:Ito2}
and~\ref{thm:Itocor}. These results can be seen as analogs of well-known theorems of It\^{o}. In Corollary~\ref{cor:multiperm} we apply
these results to set-theoretic involutive non-degenerate solutions of the Yang--Baxter equation. In Section~\ref{characteristic_simple}
we introduce characteristic ideals of skew braces and we classify finite skew left braces that do not contain non-trivial characteristic ideals. 

\section{Skew braces and strong left ideals}
\label{strong}

A \emph{skew left brace} is a triple $(A,+,\circ)$, where $(A,+)$ and $(A,\circ)$ are groups and, for all $a,b,c\in A$, the following
compatibility condition holds \[a\circ(b+c)=a\circ b-a+a\circ c.\] The group $(A, +)$ is called the \emph{additive group} of $A$ and
$(A,\circ)$ is called the \emph{multiplicative group} of $A$. For $a\in A$ we denote by $a'$ the inverse of $a$ with respect to the circle
operation $\circ$. By convention, left braces will be those skew left braces with abelian additive group. A skew left brace $A$ is said
to be \emph{trivial} if both operations $+$ and $\circ$ coincide, i.e., if $a+b=a\circ b$ for all $a,b\in A$. If $A$ is a skew left brace,
then the map $\lambda\colon (A,\circ)\to\Aut(A,+)$, $a\mapsto\lambda_a$, where $\lambda_a(b)=-a+a\circ b$, is a group homomorphism. It follows that 
\begin{equation}
    \label{eq:formulas}
    a\circ b=a+\lambda_a(b),\quad
    a+b=a\circ\lambda^{-1}_a(b),\quad
    \lambda_a(a')=-a
\end{equation}
for all $a,b\in A$. 

For elements $a$ and $b$ of a skew left brace $A$ we put
\[
a * b = \lambda_a(b)-b=-a+a\circ b-b.
\]
The operation $*$ measures the ``difference'' between the additive and multiplicative operations. In particular, if for all $a,b\in A$ we have 
$a*b=0$, then both operations are the same, i.e., the skew left brace simply reduces to a group; and thus for braces this is an abelian group.

\begin{lem} 
\label{lem:calcbraces}
Let $A$ be a skew left brace. For any $x,y,z \in A$ the following statements hold:
\begin{enumerate}
    \item $x*(y+z) = x*y +y +x*z -y$,
    \item $(x\circ y)*z = x*(y*z) + y*z + x*z$.
\end{enumerate}
\end{lem}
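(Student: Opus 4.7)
The plan is to reduce both identities to direct verifications from the definitions, using three ingredients: the compatibility axiom $a\circ(b+c)=a\circ b-a+a\circ c$, the definition $a*b=-a+a\circ b-b=\lambda_a(b)-b$, and the fact already recalled in the excerpt that $\lambda\colon(A,\circ)\to\Aut(A,+)$ is a group homomorphism, so $\lambda_{x\circ y}=\lambda_x\circ\lambda_y$ and each $\lambda_x$ is an additive automorphism of $(A,+)$.

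For (1), I would expand the left-hand side as $x*(y+z)=-x+x\circ(y+z)-(y+z)$ and apply the compatibility axiom to $x\circ(y+z)$, obtaining $-x+x\circ y-x+x\circ z-z-y$. For the right-hand side, I would substitute the definitions of $x*y$ and $x*z$ into $x*y+y+x*z-y$ and cancel the internal $-y+y$, arriving at the same expression. Thus (1) is essentially a bookkeeping exercise in the possibly nonabelian group $(A,+)$; the particular bracketing on the right (with a conjugation by $y$ around $x*z$) is precisely what is needed to compensate for the lack of commutativity.

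For (2), the cleaner route is via $\lambda$. Since $\lambda$ is a homomorphism, $(x\circ y)*z=\lambda_{x\circ y}(z)-z=\lambda_x\bigl(\lambda_y(z)\bigr)-z$. Rewriting $\lambda_y(z)=y*z+z$ and using that $\lambda_x$ is an automorphism of $(A,+)$ gives $\lambda_x\bigl(\lambda_y(z)\bigr)=\lambda_x(y*z)+\lambda_x(z)$. Re-substituting $\lambda_x(w)=x*w+w$ for $w=y*z$ and $w=z$ yields $x*(y*z)+(y*z)+x*z+z$, and subtracting $z$ produces exactly the right-hand side of (2).

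I do not foresee a genuine obstacle: both statements are formal consequences of the skew brace axioms. The only point that requires care is the non-commutativity of $(A,+)$ in (1), so I would be careful to never silently permute additive summands; everything else is a direct unfolding of definitions combined with the homomorphism property of $\lambda$.
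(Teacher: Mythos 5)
Your proposal is correct and follows essentially the same route as the paper: part (2) is verbatim the paper's argument via $\lambda_{x\circ y}=\lambda_x\lambda_y$, and for part (1) your direct use of the compatibility axiom is just the unpacked form of the paper's appeal to $\lambda_x$ being an automorphism of $(A,+)$, with the same careful bookkeeping in the nonabelian additive group.
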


\begin{proof}
As $\lambda_x\colon (A,+)\to(A,+)$ is a group homomorphism, it follows that
\begin{align*}
    x*(y+z) & =\lambda_x(y+z)-z-y\\
            & =\lambda_x(y) +\lambda_x(z)-z-y\\
            & =x*y + y + x*z -y.
\end{align*}
Since $\lambda\colon (A,\circ)\to\Aut(A,+)$ is a group homomorphism, 
\begin{align*}
    (x\circ y)*z & =\lambda_{x\circ y}(z)-z\\
                 & =\lambda_x(\lambda_y(z)) -z\\
                 & =\lambda_x(y*z) +\lambda_x(z)-z\\
                 & = x*(y*z) +y*z + x*z.\qedhere
\end{align*}
\end{proof}

Note that if we replace both $+$ and $\circ$ by a single operation, and $*$ by the ordinary commutator corresponding to this new operation, then in the 
previous lemma we obtain well-known commutator formulas from group theory. Thus, intuitively, one can understand this operation as an analog of the 
group-theoretical commutator. In this spirit, a trivial skew left brace corresponds with an ``abelian structure''. As we will show later, this inspires 
a counterpart to the It\^{o} theorem on metabelian groups in the context of skew left braces. However, this analogy fails for other properties defined 
by commutators (e.g., nilpotency, solvablility, \ldots). 

\begin{rem}
\label{rem:conjugation}
Using the formulas of~\eqref{eq:formulas} we obtain that 
\begin{align*}
    \label{eq:conjugation}
    a\circ b\circ a' & =a+\lambda_a(b\circ a')\\
                     & =a+\lambda_a(b+b*a'+a')\\
                     & =a+\lambda_a(b+b*a')-a
\end{align*}
for all $a,b\in A$. 
\end{rem}

A \emph{left ideal} of a skew left brace $A$ is a subgroup $I$ of $(A,+)$ such that
$\lambda_a(I)\subseteq I$ for all $a\in A$, which is equivalent to $a*b\in I$ for all $a\in A$ and $b\in I$. It follows that a left ideal is a
skew left subbrace and, in particular, $(I,\circ)$ is a subgroup of $(A,\circ)$. Moreover, if $(I,\circ)$ is normal in $(A,\circ)$ and $(I,+)$
is normal in $(A,+)$ then one says that $I$ is an \emph{ideal} of $A$. It is known that ideals of skew braces correspond bijectively to kernels
of skew brace homomorphisms. The socle $\Soc(A)$ of a skew left brace $A$ is defined as \[\Soc(A)=\Ker\lambda\cap\Cen(A,+)\] and it is an ideal of $A$. 

\begin{defn}
Let $(A,+,\circ)$ be a skew left brace. A left ideal $I$ is called a \emph{strong left ideal} if $(I,+)$ is a normal subgroup of $(A,+)$. 
\end{defn}

\begin{exa}
A characteristic subgroup of the additive group of a skew left brace is a strong left ideal. Furthermore, ideals are strong left ideals.
\end{exa}

Braces were introduced by Rump in~\cite{MR2278047} as an algebraic tool to study involutive non-degenerate set-theoretic solutions
of the Yang--Baxter equation. A \emph{set-theoretic solution} of the Yang--Baxter equation is a pair $(X,r)$, where $X$ is a set and
$r\colon X\times X\to X\times X$ is a bijective map such that
\[(r\times{\id})({\id}\times r)(r\times{\id})=({\id}\times r)(r\times{\id})({\id}\times r).\]
If we write $r(x,y)=(\sigma_x(y),\tau_y(x))$, then $(X,r)$ is said to be \emph{non-degenerate} 
if all maps $\sigma_x,\tau_x\colon X\to X$ are bijective. 

A set-theoretic solution $(X,r)$ is called \emph{decomposable} if there exists a non-trivial partition $X=Y\cup Z$ such that
$(Y,r|_{Y\times Y})$ and $(Z,r|_{Z\times Z})$ are set-theoretic solutions and $r(Y\times Z)=Z\times Y$ and $r(Z\times Y)=Y\times Z$. 

Skew left braces produce non-degenerate solutions of the Yang--Baxter equation: if $A$ is a skew left brace, then the pair $(A,r_A)$, where 
\[r_A\colon A\times A\to A\times A,\quad r_A(a,b)=(\lambda_a(b),\lambda_a(b)'\circ a\circ b),\] is a non-degenerate solution of the Yang--Baxter 
equation. Moreover, $(A,r_A)$ is involutive (i.e., $r_A^2=\id$) if and only if the group $(A,+)$ is abelian.  

The set-theoretic solution $(A,r_A)$ associated to a non-zero skew left brace $A$ is always decomposable as $A=\{0\}\cup A\setminus\{0\}$. 
We show that strong left ideals of $A$ provide more decompositions of $(A,r_A)$.

\begin{pro}
\label{pro:decomposable}
Let $A$ be a skew left brace. If there exists a proper strong left ideal $I$, then $(A,r_A)$ is decomposable as $A = I \cup A\setminus I$. 
\end{pro}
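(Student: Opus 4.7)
To show that $(A,r_A)$ is decomposable with respect to the partition $A = I \cup (A \setminus I)$, my plan is to establish, for every $(a,b) \in A \times A$ and $r_A(a,b) = (u,v)$ with $u = \lambda_a(b)$ and $v = u' \circ a \circ b$, the two equivalences
\[
u \in I \iff b \in I, \qquad v \in I \iff a \in I.
\]
A case-by-case inspection based on whether $a$ and $b$ lie in $I$ then immediately yields $r_A(I \times I) \subseteq I \times I$, $r_A((A\setminus I) \times (A \setminus I)) \subseteq (A\setminus I) \times (A \setminus I)$, $r_A(I \times (A\setminus I)) \subseteq (A\setminus I) \times I$, and the symmetric inclusion for $(A\setminus I) \times I$. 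Bijectivity of $r_A$ upgrades the last two inclusions to equalities, and the partition is non-trivial because $I$ is proper and contains $0$. The braid relation restricts automatically, so $(I, r_A|_{I \times I})$ and $(A \setminus I, r_A|_{(A \setminus I) \times (A \setminus I)})$ are themselves set-theoretic solutions.

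The first equivalence is immediate: the left ideal condition $\lambda_a(I) \subseteq I$, applied both to $a$ and to $a'$ via $\lambda_{a'} = \lambda_a^{-1}$, gives $\lambda_a(I) = I$. For the second equivalence I plan to compute $v$ explicitly. Using $a \circ b = a + u$ from~\eqref{eq:formulas}, the identity $x \circ y = x + \lambda_x(y)$, and the relation $\lambda_{u'}(u) = -u'$ (again from~\eqref{eq:formulas}, applied to $u'$ in place of $a$), I would derive
\[
v \;=\; u' \circ (a + u) \;=\; u' + \lambda_{u'}(a) + \lambda_{u'}(u) \;=\; u' + \lambda_{u'}(a) - u'.
\]
The strong hypothesis now enters: since $(I, +)$ is normal in $(A, +)$, one has $v \in I$ if and only if $\lambda_{u'}(a) \in I$, and by the first equivalence the latter is equivalent to $a \in I$.

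The main hurdle is recognizing the conjugation-type expression $v = u' + \lambda_{u'}(a) - u'$, which makes it transparent that additive normality of $I$ — rather than merely $\lambda$-invariance — is exactly the hypothesis needed to propagate membership in $I$ from $a$ to $v$. Once this identity is in place, the rest of the argument is a short mechanical verification of the four cases.
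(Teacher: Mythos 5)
Your proof is correct, and it follows the same overall strategy as the paper: verify that both components of $r_A$ respect the partition $A=I\cup(A\setminus I)$, with the left-ideal property ($\lambda_x(I)=I$ for all $x$) handling the first component and additive normality of $I$ entering exactly at the second component. The details differ in a worthwhile way. The paper fixes the second argument $x$, rewrites the second output as $\rho_x(a)=\lambda_a(x)'\circ a\circ x=(a'+x)'\circ x$, uses normality and then the left-ideal property to produce $c\in I$ with $\rho_x(a)=c'$, so that $\rho_x(I)=I$, and then invokes Lemma~2.4 of~\cite{MR3835326} to get $\rho_x(A\setminus I)=A\setminus I$. You instead derive the conjugation identity $v=u'+\lambda_{u'}(a)-u'$ (with $u=\lambda_a(b)$, using $a\circ b=a+u$ and $\lambda_{u'}(u)=-u'$), which gives the two-way equivalences $u\in I\iff b\in I$ and $v\in I\iff a\in I$ directly; this makes the role of additive normality transparent and renders the argument self-contained, since both the complement case and the upgrade of inclusions to equalities come from your disjointness-plus-bijectivity argument on the four blocks of $A\times A$ rather than from an external lemma, and it works equally well for infinite $A$. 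One small point of care: to conclude that $(I,r_A|_{I\times I})$ and $(A\setminus I,r_A|_{(A\setminus I)\times(A\setminus I)})$ are themselves solutions you need the restricted maps to be bijective, i.e.\ equality in all four inclusions, not only the last two; the same partition argument yields exactly that, so this is a presentational remark rather than a gap.
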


\begin{proof}
Let $I$ be a proper strong left ideal. As $I$ is a left ideal, it holds, for any $x \in A$, that $\lambda_x(I) = I$. Moreover,
$\lambda_x(A\setminus I) = A \setminus I$. Let $x \in A$ and $a \in I$. Let $\rho_x(a) = \lambda_a(x)'\circ a\circ x$. Then 
\[
\rho_x(a) = \lambda_a(x)'\circ a\circ x= (a'\circ\lambda^{-1}_{a'}(x))'\circ x=(a'+x)'\circ x.
\]
As $I$ is normal in $(A,+)$ this is equal to $(x + b)'\circ x$ for some $b \in I$. As $I$ is a left ideal, there exists $c\in I$ such
that the previous is equal to  \[( x \circ c)' \circ x = c' \circ x' \circ x = c' \in I.\]
Hence $\rho_x(I) = I$. By~\cite[Lemma 2.4]{MR3835326}, $\rho_x(A\setminus I) = A \setminus I$ and hence the claim follows. 
\end{proof}

\section{Skew left braces admitting a factorization}
\label{factorizations}

In this section we study skew left braces $A$ such that $A=B+C$ for left ideals $B$ and $C$. Observe that
in this case it follows from ~\eqref{eq:formulas} that 
\[
A=B+C=C+B=B\circ C=C\circ B.
\]

\begin{defn}
	Let $A$ be a skew left brace and let $B$ and $C$ be left ideals of $A$. We say
	that $A$ admits a \emph{factorization} through $B$ and $C$ if $A=B+C$.
\end{defn}

We will study the case where $B$ and $C$ are trivial skew left braces. For that purpose,  
we shall need the following lemma. 

\begin{lem}\label{lem:calculations}
    Let $A$ be a skew left brace such that $A = B+C$, where $B$ and $C$ are left ideals. If $B$ and $C$ are trivial as skew left braces then,
    for any $b,\beta \in B$ and $c,\gamma \in C$ and $a \in A$, the following statements hold:
	\begin{enumerate}
	    \item $\lambda_{\beta \circ \gamma} = \lambda_{\gamma \circ \beta}$,
	    \item $(c + b) \circ \beta - \beta = c + b + c* \beta $,
	    \item $b\circ c \circ b' \circ c' = b \circ c - c\circ b = b + \lambda_b(c) - \lambda_c(b) -c \in \Ker \lambda$.
	\end{enumerate}
\end{lem}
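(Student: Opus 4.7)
The plan is to prove the three claims in order, using the multiplicativity of $\lambda\colon(A,\circ)\to\Aut(A,+)$, the decomposition $A=B+C$, and the hypothesis that $B$ and $C$ are trivial left ideals.

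For (1), since $\lambda$ is a group homomorphism we have $\lambda_{\beta\circ\gamma}=\lambda_\beta\lambda_\gamma$ and $\lambda_{\gamma\circ\beta}=\lambda_\gamma\lambda_\beta$, so the claim reduces to showing that $\lambda_\beta$ and $\lambda_\gamma$ commute. Both sides are additive automorphisms and $A=B+C$ as additive groups, so it suffices to verify the equality on $B$ and on $C$ separately. Take $b\in B$: the left-ideal property gives $\lambda_\gamma(b)\in B$, and the triviality of $B$ implies that $\lambda_\beta$ fixes $B$ pointwise; hence $\lambda_\beta\lambda_\gamma(b)=\lambda_\gamma(b)=\lambda_\gamma\lambda_\beta(b)$. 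The symmetric argument, swapping the roles of $B,C$ and of $\beta,\gamma$, settles $c\in C$.

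For (2), the idea is to rewrite the additive sum as a circle product in order to land inside $B$. By~\eqref{eq:formulas}, $c+b=c\circ\lambda_c^{-1}(b)$, and since $B$ is a left ideal the element $\tilde b:=\lambda_c^{-1}(b)$ lies in $B$. Triviality of $B$ then gives $\tilde b\circ\beta=\tilde b+\beta$, so
\[
(c+b)\circ\beta=c\circ(\tilde b+\beta)=c+\lambda_c(\tilde b)+\lambda_c(\beta)=c+b+\lambda_c(\beta).
\]
Subtracting $\beta$ and recognising $\lambda_c(\beta)-\beta=c*\beta$ yields the claim.

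For (3), the rightmost equality is immediate from $x\circ y=x+\lambda_x(y)$ applied to $b\circ c$ and $c\circ b$. For the leftmost equality, note that $b'\circ c'=(c\circ b)'$, and by (1) together with $\lambda_a(a')=-a$ one has $\lambda_{b\circ c}(b'\circ c')=\lambda_{c\circ b}((c\circ b)')=-(c\circ b)$; hence $b\circ c\circ b'\circ c'=(b\circ c)+\lambda_{b\circ c}(b'\circ c')=b\circ c-c\circ b$. Membership in $\Ker\lambda$ is obtained by the direct computation $\lambda_{b\circ c\circ b'\circ c'}=\lambda_b\lambda_c\lambda_b^{-1}\lambda_c^{-1}$, which is the identity by (1).

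The main obstacle is (1): a head-on manipulation of the brace axioms gets tangled quickly. The trick is to observe that the identity is fundamentally additive, so one may split over $A=B+C$; then left-ideal invariance confines each computation to a single trivial factor, where the $\lambda$-action is the identity.
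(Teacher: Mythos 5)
Your proof is correct and follows essentially the same route as the paper: part (1) by evaluating on the decomposition $A=B+C$ using triviality and left-ideal invariance, part (2) by rewriting $c+b=c\circ\lambda_{c'}(b)$ and expanding with $\lambda_c$, and part (3) from (1) and the formulas~\eqref{eq:formulas}. Your derivation of the first equality in (3) via $\lambda_{b\circ c}(b'\circ c')=\lambda_{c\circ b}\bigl((c\circ b)'\bigr)=-(c\circ b)$ is a slightly shorter computation than the paper's term-by-term expansion, but it uses the same ingredients.
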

\begin{proof}
	To prove (1) put $c_1=\lambda_\beta(c)\in C$ and $b_1=\lambda_{\gamma}(b)\in B$. Then, as $B$ and $C$ are trivial skew left braces,
	\begin{align*}
		\lambda_\beta(b+c) & =\lambda_{\beta}(b)+\lambda_\beta(c)=b+c_1,\\
		\lambda_\gamma(b+c) & =\lambda_{\gamma}(b)+\lambda_{\gamma}(c)=b_1+c.
	\end{align*}
	Hence,
	\begin{align*}
		\lambda_{\beta\circ\gamma}(b+c)=b_1+c_1=\lambda_{\gamma\circ\beta}(b+c).
	\end{align*}
	Let us prove (2). As $B$ is a trivial skew left brace, it follows from~\eqref{eq:formulas} that 
	\begin{align*}
		(c+b)\circ\beta-\beta &= (c\circ\lambda_{c'}(b))\circ\beta-\beta\\
		&=c\circ(\lambda_{c'}(b)\circ\beta)-\beta\\
		&=c\circ(\lambda_{c'}(b)+\beta)-\beta\\
		&=c\circ\lambda_{c'}(b)-c+c\circ\beta-\beta\\
		&=c+b+c*\beta.
	\end{align*}
	Part (3) follows from the following computation
	\begin{align*}
	    b \circ c \circ b' \circ c' &= (b\circ c) +\lambda_{b\circ c} (b' + \lambda_{b'}(c'))\\
	    &=b + \lambda_b(c) + \lambda_{b\circ c}(b') + \lambda_{b \circ c \circ b'}(c')\\
	     &= b+\lambda_{b}(c) + \lambda_{c}(b') + \lambda_{b\circ b'}(c')\\ 
	                                & =b + \lambda_b(c) +\lambda_c(-b) -c\\
	                                & =b \circ c - c \circ b.
	\end{align*}
	Moreover, by (1) it follows that $b \circ c \circ b' \circ c' \in \Ker \lambda$.
\end{proof}


If $X$ and $Y$ are non-empty subsets of a skew left brace $A$, we define $X*Y$ as the additive subgroup of $A$ generated
by all elements of the form $x*y$, where $x\in X$ and $y\in Y$. One defines $A^{(1)} = A$ and $A^{(n)}=A^{(n-1)}*A$ for $n\geq 2$. Then 
\[
A^{(1)}\supseteq A^{(2)}\supseteq A^{(3)}\supseteq\dotsb
\]
is a chain of ideals of $A$ known as the \emph{right series} of $A$, see~\cite[Proposition 2.1]{CSV}. Following Rump~\cite{MR2278047}, 
the skew left brace $A$ is said to be \emph{right nilpotent} of class $m$ if $A^{(m)}=0$ and $A^{(m-1)}\ne 0$.

\begin{defn}
A skew left brace $A$ is said to be \emph{meta-trivial} if $A^{(2)}$ is a trivial skew left brace. Equivalently, there exists
an ideal $I$ of $A$ such that $I$ and $A/I$ are trivial as skew left braces. 
\end{defn}

A left ideal $I$ of a skew left brace $A$ is said to be  \emph{meta-trivial} if $I$ is meta-trivial as a skew left brace.

\begin{lem}\label{lem:hardworkfactoriz}
Let $A$ be a skew left brace such that $A=B+C$ is a factorization through left ideals  $B$ and $C$. If $B$ and $C$ are trivial skew left braces, then: 
\begin{enumerate}
    \item $B*C$ and $C*B$ are strong left ideals of $A$,
    \item $B*C$ and $C*B$ are trivial skew left braces, and 
    \item $A^{(2)} = C*B+B*C=B*C+C*B$.
\end{enumerate}
\end{lem}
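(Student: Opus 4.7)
The plan is to prove (1) first, as (2) and (3) follow easily from it; by symmetry it suffices to handle $B*C$. The easy observations are: $B*C \subseteq C$, because $\lambda_b(c) \in C$ and hence $b*c = \lambda_b(c) - c \in C$; and writing a general $a \in A$ as $\beta \circ \gamma$ with $\beta \in B$, $\gamma \in C$ (using $A = B \circ C$), $\lambda_\gamma$ is trivial on $B*C \subseteq C$ (since $C$ is trivial), while a short expansion gives $\lambda_\beta(b*c) = (\beta \circ b)*c - \beta*c \in B*C$. Combined, these show that $B*C$ is a left ideal.

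The main obstacle is the additive normality $(B*C, +) \trianglelefteq (A, +)$. The key reduction is to prove $B*A = B*C$. For this, given $a \in A$, use the decomposition $a = c + b'$ with $c \in C$, $b' \in B$ (possible as $A = C + B$), and apply Lemma~\ref{lem:calcbraces}(1):
\[
b*a = b*(c + b') = b*c + c + b*b' - c = b*c,
\]
where $b*b' = 0$ follows from triviality of $B$ (so $b \circ b' = b + b'$ and $\lambda_b(b') = b'$). Now rewriting the same $a$ as $a = b_0 + c_0 \in B + C$ and applying Lemma~\ref{lem:calcbraces}(1) again (using $b*b_0 = 0$) yields $b*a = b_0 + (b*c_0) - b_0$. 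Comparing the two expressions shows $b_0 + (b*c_0) - b_0 \in B*C$, and by additivity this extends to $b_0 + x - b_0 \in B*C$ for every $x \in B*C$ and $b_0 \in B$. Conjugation by $c_1 \in C$ is handled by applying Lemma~\ref{lem:calcbraces}(1) to $b*(c_1+c_2)$ and rearranging to $c_1 + (b*c_2) - c_1 = -b*c_1 + b*(c_1+c_2) \in B*C$. Since $A = B + C$, these two invariances combine into additive normality.

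Part (2) follows immediately: as a left ideal, $B*C$ is a sub-skew-brace of the trivial skew left brace $C$, so $+$ and $\circ$ coincide on it. For (3), the inclusion $B*C + C*B \subseteq A^{(2)}$ is trivial. For the converse, decompose $a_2 = b_2 + c_2$ and apply Lemma~\ref{lem:calcbraces}(1):
\[
a_1 * a_2 = (a_1 * b_2) + b_2 + (a_1 * c_2) - b_2.
\]
A parallel argument using Lemma~\ref{lem:calcbraces}(2) together with triviality (writing $a_1 = \beta \circ \gamma$ and using $\gamma * c = 0$, or $a_1 = \gamma \circ \beta$ and $\beta * b = 0$) yields $A*C = B*C$ and $A*B = C*B$. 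Hence $a_1 * b_2 \in C*B$, $a_1 * c_2 \in B*C$, and the conjugate $b_2 + (a_1 * c_2) - b_2 \in B*C$ by the normality from (1). Finally, $B*C + C*B = C*B + B*C$ since both summands are additively normal.
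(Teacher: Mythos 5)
Your proof is correct and follows essentially the same route as the paper: the containment $B*C\subseteq C$, the identities of Lemma~\ref{lem:calcbraces} combined with triviality of $B$ and $C$ (which give $B*A\subseteq B*C$, $A*C\subseteq B*C$ and $A*B\subseteq C*B$), and the decompositions $A=B+C=C+B=B\circ C=C\circ B$. The differences are only cosmetic --- you check $\lambda$-invariance via $\lambda_\beta(b*c)=(\beta\circ b)*c-\beta*c$ and split additive normality into conjugation by $B$ and by $C$ separately, whereas the paper conjugates by a general element of $A$ in one step --- so nothing of substance changes.
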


\begin{proof}
Since $C$ is a left ideal, it follows that $B*C \subseteq C$. Let $b,\beta \in B$ and $c, \gamma \in C$. As $C$ is trivial, it follows that 
\begin{align*}
    \lambda_{b \circ c} (\beta * \gamma) & =\lambda_b(\beta * \gamma)\\
                                         & =\lambda_b \lambda_{\beta} (\gamma) - \lambda_b(\gamma)\\
                                         & =\lambda_{b \circ \beta \circ b'}\lambda_b(\gamma) -\lambda_b(\gamma)\\
                                         & =(b \circ \beta \circ b') * \lambda_b(\gamma) \in B*C. 
\end{align*}
Hence $B*C$ is a left ideal and trivial as a skew left brace. 

Let $a \in A$, $b \in B$ and $c \in C$. Write $a=b_1 + c_1$, with $b_1 \in B$ and $c_1 \in C$. Then, by Lemma \ref{lem:calcbraces},
\begin{equation}\label{eq:7}
    \begin{aligned}
        a+(b*c)-a & =a + \lambda_b(c) - c - a\\
                  & =-(b*a) + b*(a+c)\\
                  & =-(b*(b_1+c_1)) + b* (b_1 + c_1+c).
    \end{aligned}
\end{equation}
Now, as $B+C=C+B$, it follows that for any $\beta \in B$ and $\gamma \in C$, there exist $\beta_1\in B$ and $\gamma_1\in C$ such that
$\beta + \gamma = \gamma_1 + \beta_1$. Hence, for any $b \in B$ it holds, by Lemma \ref{lem:calcbraces}, that
\[b*(\beta + \gamma)=b*(\gamma_1 + \beta_1)= b*\gamma_1 + \gamma_1 + b*\beta_1 - \gamma_1=b*\gamma_1 ,\]
as $B$ is trivial. Applying this on \eqref{eq:7} it follows that $B*C$ is a normal subgroup of $(A,+)$.
This proves (1) and (2) for $B*C$. The proof for $C*B$ is similar.

Now we show that $A^{(2)} \subseteq C*B + B*C$. Let $b,b_1 \in B$ and $c,c_1 \in C$. Then, by Lemma \ref{lem:calcbraces},
\begin{align*}
    (b\circ c)*(b_1+c_1) & =(b\circ c)*b_1 + b_1 + (b \circ c)*c_1 -b_1\\
                         & =\lambda_{b\circ c}(b_1) - b_1 + b_1 + b*(c*c_1) + c*c_1 + b*c_1 -b_1\\
                         & =\lambda_c(b_1) - b_1 +b_1 +b*c_1-b_1\\
                         & =c*b_1 + b_1 + b*c_1 -b_1 \in C*B+B*C.
\end{align*}
Clearly $C*B + B*C \subseteq A^{(2)}$ and thus $A^{(2)} = C*B+B*C=B*C + C*B$. 
\end{proof}

The possible approach to skew left brace factorization is through strong left ideals. 
In this setting we prove an analog of It\^{o}'s theorem~\cite{MR0071426} for skew left braces. 

\begin{thm} 
\label{thm:Ito2}
Let $A$ be a skew left brace. If $A=B+C$ is a factorization through strong left ideals $B$ and $C$ that are trivial as skew left braces,
then $A$ is right nilpotent of class at most three. In particular, $A$ is meta-trivial.
\end{thm}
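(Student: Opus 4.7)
My plan is to show $A^{(2)}\subseteq\Ker\lambda$, from which everything follows: for $u\in A^{(2)}$ and $a\in A$ one has $u*a=\lambda_u(a)-a=0$, so $A^{(3)}=A^{(2)}*A=0$ (right nilpotent of class at most three); and then $A^{(2)}*A^{(2)}\subseteq A^{(3)}=0$ says $A^{(2)}$ is a trivial subbrace, so $A$ is meta-trivial. A straightforward check shows $\Ker\lambda$ is closed under addition: for $u_1,u_2\in\Ker\lambda$ the formulas in~\eqref{eq:formulas} give $u_1+u_2=u_1\circ\lambda_{u_1}^{-1}(u_2)=u_1\circ u_2\in\Ker\lambda$, and similarly $-u_1=u_1'\in\Ker\lambda$. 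So by Lemma~\ref{lem:hardworkfactoriz}(3) it suffices to show that each generator $b*c$ and $c*b$ of $A^{(2)}=B*C+C*B$ lies in $\Ker\lambda$.

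The key structural input is that $\lambda(B)$ and $\lambda(C)$ commute in $\Aut(A,+)$: for $b\in B$, $c\in C$ and $a=b_0+c_0\in A$ a direct computation gives $\lambda_b\lambda_c(a)=\lambda_c(b_0)+\lambda_b(c_0)=\lambda_c\lambda_b(a)$, using $\lambda_b|_B=\id$, $\lambda_c|_C=\id$, and the left-ideal invariance of $B$ and $C$. Consequently $\lambda(A)=\lambda(B)\lambda(C)$ is abelian. Now fix $b*c$; since $\lambda_b(c),c$ both lie in the trivial subbrace $C$, the additive expression $\lambda_b(c)-c$ coincides with the $\circ$-product $\lambda_b(c)\circ c'$, so $\lambda_{b*c}=\lambda_{\lambda_b(c)}\lambda_c^{-1}\in\lambda(C)$, whence $\lambda_{b*c}|_C=\id$. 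For the restriction to $B$, express $\lambda_b(c)$ as $(b\circ c)\circ e$ for an appropriate $e\in B$: in the brace case $e=\lambda_c^{-1}(-b)\in B$ works by the direct calculation $(b\circ c)\circ e=b\circ c+\lambda_b\lambda_c(e)=b\circ c+\lambda_b(-b)=b\circ c-b=\lambda_b(c)$. Then $\lambda_{\lambda_b(c)}=\lambda_b\lambda_c\lambda_e$, and since $\lambda_b|_B=\lambda_e|_B=\id$, this restricts to $\lambda_c|_B$. Hence $\lambda_{b*c}|_B=\lambda_c\lambda_c^{-1}|_B=\id$, and combined with $\lambda_{b*c}|_C=\id$ and $A=B+C$ we conclude $\lambda_{b*c}=\id$, i.e.\ $b*c\in\Ker\lambda$. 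The argument for $c*b$ is symmetric.

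The main obstacle is the identification $\lambda_b(c)=(b\circ c)\circ e$ in the full skew-brace setting, where $(A,+)$ need not be abelian and the direct computation yields $b\circ c-b\neq-b+b\circ c=\lambda_b(c)$ in general. Bridging this gap relies on the commutativity $\lambda_b\lambda_c=\lambda_c\lambda_b$ established above, together with Lemma~\ref{lem:calculations}(3) (the multiplicative commutator $b\circ c\circ b'\circ c'$ lies in $\Ker\lambda$) and the normality of $B$ in $(A,+)$ to absorb the additive discrepancy into $\Ker\lambda$; this is precisely where the strong-left-ideal hypothesis (rather than merely the left-ideal hypothesis) enters the argument.
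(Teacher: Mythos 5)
Your overall strategy is sound and close in spirit to the paper's: reduce to showing that the generators $b*c$ and $c*b$ of $A^{(2)}=B*C+C*B$ lie in $\Ker\lambda$ (the paper phrases this as $A^{(2)}$ acting trivially on $A$, i.e.\ $(b*c)*\beta=0$), and your preliminary steps are correct: $\Ker\lambda$ is closed under $+$, the commutation $\lambda_b\lambda_c=\lambda_c\lambda_b$ for $b\in B$, $c\in C$ holds (it is exactly Lemma~\ref{lem:calculations}(1)), and $\lambda_{b*c}=\lambda_{\lambda_b(c)}\lambda_c^{-1}\in\lambda(C)$ correctly gives $\lambda_{b*c}|_C=\id$ using only that $C$ is a trivial left ideal. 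However, there is a genuine gap at the decisive step, the restriction to $B$. Your identification $\lambda_b(c)=(b\circ c)\circ e$ with $e=\lambda_c^{-1}(-b)$ is only verified when $(A,+)$ is abelian, i.e.\ in the left brace case, and your final paragraph does not prove the general case: it only names the ingredients you expect to use (commutativity of the $\lambda$'s, Lemma~\ref{lem:calculations}(3), normality of $(B,+)$) without carrying out any computation showing how the discrepancy $b+\lambda_b(c)-b\neq\lambda_b(c)$ is absorbed. Note that in the abelian case every left ideal is automatically strong, so the part of the argument you have actually completed is precisely the part where the hypothesis ``strong'' is vacuous; the content of the theorem for genuine skew braces is exactly what remains unproven. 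A smaller but related flaw: the assertion that $\lambda(A)=\lambda(B)\lambda(C)$ is abelian does not follow in the skew setting, since $(B,\circ)=(B,+)$ may be non-abelian and hence $\lambda(B)$ need not be abelian; only the elementwise commutation of $\lambda(B)$ with $\lambda(C)$ is justified (fortunately you do not seem to use the stronger claim).

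For comparison, the paper closes this gap by an additive computation rather than by factoring $\lambda_{b*c}$ inside $\lambda(C)$: it writes $(b*c)*a=(b*c)*\beta$ for $a=\beta+\gamma$, then evaluates $(b\circ c-c\circ b)+\beta$ in two ways. Since $b\circ c\circ b'\circ c'=b\circ c-c\circ b\in\Ker\lambda$ (Lemma~\ref{lem:calculations}(3)), one has $(b\circ c-c\circ b)+\beta=(b\circ c-c\circ b)\circ\beta$; normality of $(B,+)$ rewrites $b+\lambda_b(c)-\lambda_c(b)-c$ as $\lambda_b(c)-c+b_1$ with $b_1\in B$, and Lemma~\ref{lem:calculations}(2) (applied to this element of the form $c+b$ with $c\in C$, $b\in B$) turns the $\circ\beta$ into $\lambda_b(c)-c+b_1+(b*c)*\beta+\beta$; comparing with the direct sum $\lambda_b(c)-c+b_1+\beta$ forces $(b*c)*\beta=0$. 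If you want to salvage your approach, this is the computation you would need to reproduce (or an equivalent one); as written, your proposal proves the theorem only for left braces with abelian additive group.
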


\begin{proof}
By Lemma \ref{lem:hardworkfactoriz}, $B*C$ and $C*B$ are strong left ideals of $A$, and both are trivial as skew left braces. Furthermore, 
\[
A^{(2)} = B*C + C*B = (B*C) \circ (C*B).
\]
It rests to show that $A^{(2)}$ acts trivially on $A$. We first show that $B*C$ acts trivially on $A$. For that purpose, 
let $b\in B$, $c\in C$ and $a\in A$. Write $a=\beta+\gamma$, where $\beta\in B$ and $\gamma\in C$. Then, again by Lemma~\ref{lem:calcbraces},
\[
(b*c)*(\beta+\gamma)=(b*c)*\beta+\beta+(b*c)*\gamma-\beta=(b*c)*\beta, 
\]
as $C$ is a trivial skew left brace. By Lemma \ref{lem:calculations}(3), 
\[
(b\circ c - c\circ b) + \beta = (b\circ c - c \circ b) \circ \beta = (b + \lambda_b(c) - \lambda_c(b) - c) \circ \beta.
\]
Since $(B,+)$ is a normal subgroup of $(A,+)$, 
\[
b\circ c-c\circ b=b+\lambda_b(c)-\lambda_c(b)-c=\lambda_b(c)-c+b_1
\]
for some $b_1\in B$. By Lemma~\ref{lem:calculations}(2), 
\begin{align*}
    (b\circ c - c\circ b) + \beta & =(\lambda_b(c) - c + b_1) \circ \beta\\
                                  & =\lambda_b(c) - c + b_1 + (b*c)*\beta + \beta
\end{align*}
and therefore $(b*c)*\beta = 0$. Thus $B*C$ acts trivially on $A$. As $(C,+)$ also is a normal subgroup of $(A,+)$,
it follows by symmetry that $C*B$ acts trivially on $A$. Hence $A^{(2)}$ acts trivially on $A$.
\end{proof}

\begin{cor}
    Let $A$ be a skew left brace. Assume that $A= B + C$, where $B$ and $C$ are (not necessarily strong) left ideals,
    which are trivial as skew left braces. Then $A$ has a meta-trivial ideal $I$ such that $A/I$ is a trivial skew left brace.
\end{cor}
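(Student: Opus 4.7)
The plan is to take $I = A^{(2)}$. Two things must then be verified: that $A^{(2)}$ is meta-trivial, and that $A / A^{(2)}$ is a trivial skew left brace. The latter is essentially immediate from the definition of $A^{(2)}$, since for any $a,b \in A$ one has $a*b \in A^{(2)}$, so the classes in the quotient satisfy $\bar a * \bar b = 0$, meaning $+$ and $\circ$ coincide on $A/A^{(2)}$. Moreover, $A^{(2)}$ is an ideal of $A$ by the result of~\cite{CSV} cited just before Theorem~\ref{thm:Ito2}, so $A/A^{(2)}$ is a genuine quotient skew left brace.

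To prove that $A^{(2)}$ itself is meta-trivial, I would apply Theorem~\ref{thm:Ito2} directly to the skew left subbrace $A^{(2)}$, equipped with the factorization supplied by Lemma~\ref{lem:hardworkfactoriz}(3), namely $A^{(2)} = (B*C) + (C*B)$. The crucial observation is that the statement and proof of Lemma~\ref{lem:hardworkfactoriz} do not assume that $B$ and $C$ are strong left ideals; only that they are left ideals that are trivial as skew left braces. Consequently, even in the present weaker hypotheses, the lemma guarantees that $B*C$ and $C*B$ are strong left ideals of $A$ and are themselves trivial as skew left braces.

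What remains is the routine verification that $B*C$ and $C*B$ are still strong left ideals of the skew left subbrace $A^{(2)}$: they are left ideals of $A^{(2)}$ because the $\lambda$-action on $A^{(2)}$ is the restriction of the $\lambda$-action on $A$, and their additive subgroups are normal in $(A^{(2)},+)$ because they are already normal in the ambient group $(A,+)$. With this in hand, Theorem~\ref{thm:Ito2} applied to $A^{(2)}$ yields that $A^{(2)}$ is right nilpotent of class at most three, hence meta-trivial. Combined with the observation that $A/A^{(2)}$ is trivial, this gives the corollary with $I = A^{(2)}$. The only obstacle is the inheritance of the strong left ideal property from $A$ to $A^{(2)}$, which is essentially a formality.
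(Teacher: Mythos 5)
Your proposal is correct and follows essentially the same route as the paper: take $I=A^{(2)}$, use Lemma~\ref{lem:hardworkfactoriz} to factor $A^{(2)}=(B*C)+(C*B)$ through strong left ideals that are trivial as skew left braces, and apply Theorem~\ref{thm:Ito2} to $A^{(2)}$, noting that triviality of $A/A^{(2)}$ is immediate. The extra check that the strong left ideal property passes from $A$ to the subbrace $A^{(2)}$ is indeed the formality you describe, which the paper leaves implicit.
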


\begin{proof}
    By Lemma \ref{lem:hardworkfactoriz}, the ideal $A^{(2)}$ has a factorization through the strong left ideals $B*C$ and $C*B$,
    which are trivial skew left braces. Hence, by Theorem~\ref{thm:Ito2}, $A^{(2)}$ is meta-trivial. Thus the  claim follows.
\end{proof}

The assumptions of Theorem~\ref{thm:Ito2} cannot be relaxed. For the examples stated below we use~\GAP~and 
the database of small skew left braces of~\cite{MR3647970}, see~\cite[\S2.1]{KSV} for notation.

\begin{exa}
The skew left brace $S(36,271)$ has additive group isomorphic to $C_3 \times A_4$ and multiplicative group isomorphic to $C_6 \times S_3$ and can be 
factorized in left ideals $X$ and $Y$, which are trivial as skew left braces, where $|X|=4$ and $|Y|=9$. In this example, $X$ is a strong left ideal, 
but $Y$ is not. This skew left brace is not meta-trivial or left nilpotent. However, it is right nilpotent of class four.
\end{exa}

The condition in the statement of Theorem \ref{thm:Ito2} that both left ideals have to be normal in $(A,+)$ cannot be replaced
by normality in $(A,\circ)$:

\begin{exa}
The group $(G,+)=S_4$ can be exactly factorized by the subgroups $X=\langle (2 4 3)\rangle$ and
$Y=\langle(3 4), (1 3) (2 4), (1 4) (2 3)\rangle$. The operation 
\[
(x+y) \circ (x_1 +y_1) = (x +x_1)+(y_1 + y)
\]
turns $G$ into a skew left brace such that $G^{(2)}$ is not a trivial skew left brace. In this example,
$Y$ is a left ideal and $X$ and $Y$ are normal in $(G,\circ)$. 
\end{exa}

\begin{thm}
\label{thm:Itocor}
Let $A$ be a non-zero skew left brace that has a factorization $A=B+C$ through left ideals $B$ and $C$, where both are trivial as skew left braces.
If $B$ is a strong left ideal of $A$, then $B$ or $C$ contains a non-zero ideal $I$ of $A$ that acts trivially on $A$.
\end{thm}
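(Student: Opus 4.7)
The strategy is to show that $B*C$ is the desired ideal, with $B$ itself covering a degenerate subcase. By Lemma~\ref{lem:hardworkfactoriz}, $B*C$ is a strong left ideal of $A$ that is trivial as a skew left brace, and since $C$ is a left ideal we have $B*C\subseteq C$ automatically.

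The main technical step is to prove $B*C\subseteq\Ker\lambda$. This is essentially the first half of the proof of Theorem~\ref{thm:Ito2}, and it relies only on the hypotheses that $C$ is a trivial skew left brace and that $(B,+)$ is normal in $(A,+)$---both of which we have. Specifically, for $b\in B$, $c\in C$ and $a=\beta+\gamma\in B+C$, triviality of $C$ combined with Lemma~\ref{lem:calcbraces}(1) reduces $(b*c)*a$ to $(b*c)*\beta$; then Lemma~\ref{lem:calculations}(2),(3) applied together with the normality of $B$ in $(A,+)$ force $(b*c)*\beta=0$. Since $\Ker\lambda$ is a subgroup of $(A,+)$ (for $x,y\in\Ker\lambda$ we have $x+y=x\circ y\in\Ker\lambda$), the containment of all generators $b*c$ of $B*C$ in $\Ker\lambda$ yields $B*C\subseteq\Ker\lambda$.

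Next I would record the general fact that any strong left ideal $I$ of $A$ with $I\subseteq\Ker\lambda$ is automatically an ideal of $A$. Indeed, for $a\in I$ and $x\in A$, Remark~\ref{rem:conjugation} gives $x\circ a\circ x'=x+\lambda_x(a+a*x')-x$, which collapses to $x+\lambda_x(a)-x$ because $a*x'=\lambda_a(x')-x'=0$; this lies in $I$ since $\lambda_x(a)\in I$ (left ideal property) and $(I,+)$ is normal in $(A,+)$. Applying this observation to $B*C$ shows that $B*C$ is an ideal of $A$.

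Finally I would split into cases. If $B*C\neq 0$, take $I=B*C\subseteq C$ and we are done. Otherwise $B*C=0$, which forces $\lambda_b(c)=c$ for all $b\in B$, $c\in C$; combined with $\lambda_b|_B=\id$ (from triviality of $B$) and the additivity of $\lambda_b$ on $A=B+C$, this gives $B\subseteq\Ker\lambda$. Then $B$ is itself a strong left ideal inside $\Ker\lambda$, hence an ideal of $A$; we take $I=B$ if $B\neq 0$, while if $B=0$ then $A=C$ is a trivial skew left brace and $I=A\subseteq C$ works. The principal obstacle is really only the inclusion $B*C\subseteq\Ker\lambda$, and it is handled by replaying the It\^{o}-style computation already carried out in Theorem~\ref{thm:Ito2}; the rest is bookkeeping and a trivial degenerate case.
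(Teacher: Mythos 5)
Your proof is correct and follows essentially the same route as the paper: Lemma~\ref{lem:hardworkfactoriz} together with the It\^{o}-style computation from Theorem~\ref{thm:Ito2} (which indeed only needs normality of $(B,+)$ and triviality of $B$ and $C$) shows that $B*C$ is a strong left ideal contained in $\Ker\lambda$, and this is the desired ideal unless $B*C=0$. The only, entirely harmless, deviations are that you establish normality of $(B*C,\circ)$ in $(A,\circ)$ via the clean general observation (using Remark~\ref{rem:conjugation}) that a strong left ideal contained in $\Ker\lambda$ is automatically an ideal, where the paper instead computes conjugation by elements of $C$ explicitly, and that in the degenerate case $B*C=0$ you take $I=B\subseteq\Ker\lambda$ (or $I=A$ when $B=0$) rather than the paper's choice $C*B=A^{(2)}$.
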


\begin{proof}
By Lemma \ref{lem:hardworkfactoriz}, it holds that $B*C$ and $C*B$ are trivial skew left braces, strong left ideals of $A$ and $A^{(2)}=B*C+C*B$. 
Using the same calculation as in Theorem \ref{thm:Ito2}, it follows that $B*C$ acts trivially on $A$. We show that $(B*C,\circ)$ is a normal
subgroup of $(A,\circ)$. Let $b,\beta \in B$ and $c,\gamma \in C$. By Lemma \ref{lem:calculations}, it holds that 
\begin{align*} 
    c \circ b \circ (\beta * \gamma) \circ b' \circ c' & =c \circ ( b + \lambda_b ( \beta*\gamma + \beta * b') -b)\circ c'\\
                                                       & =c \circ (b + \lambda_b(\beta*\gamma) -b) \circ c'.
\end{align*}
As $B*C$ is a strong left ideal, it is sufficient to show that $B*C$ is closed under conjugation of $(C,\circ)$.
Let $b \in B$ and $c,\gamma \in C$. Then, as $C$ is a trivial skew left brace, it follows that 
\[
\gamma \circ b*c \circ \gamma' = \gamma + b*c - \gamma.
\]
As $(B*C,+)$ is normal in $(A,+)$, this shows that $B*C$ is an ideal of $A$. If this is non-zero, then the theorem  follows. If $B*C=0$,
then $A^{(2)} = C*B$. This shows that in this case $C*B$ is an ideal. If this is also zero, then $A^{(2)} = 0$ and hence $A$ is a trivial
skew brace, $B$ and $C$ are ideals of $A$.
\end{proof}

The conditions in the statement of Theorem \ref{thm:Itocor} cannot be relaxed:

\begin{exa}
There exists a skew left brace $A$ with $(A,+) \cong (C_3 \times C_3)\rtimes C_2$ and $(A,\circ) \cong C_3 \times S_3$. The skew brace $A$ can be 
written as the sum of two left ideals $X$ and $Y$, which are trivial as skew left braces, where $|X|=6$ and $|Y|=3$. However, no ideal of $A$ is 
contained in $X$ or $Y$.
\end{exa}

\begin{cor}
\label{cor:class4}
Let $A=B+C$ be a skew left brace with a factorization through the left ideals $B$ and $C$, which are trivial as skew left braces.
If $B$ is a strong left ideal of $A$, then $A$ is right nilpotent of class at most four.
\end{cor}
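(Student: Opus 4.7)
The plan is to prove that $A^{(3)} \subseteq B*C$ and then conclude $A^{(4)} = 0$ by invoking the fact (established inside the proof of Theorem~\ref{thm:Itocor}) that $B*C$ acts trivially on $A$.

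First, I would collect the tools from Lemma~\ref{lem:hardworkfactoriz}: $A^{(2)} = B*C + C*B$, both $B*C$ and $C*B$ are strong left ideals trivial as skew left braces, with $B*C \subseteq C$ and $C*B \subseteq B$. From the proof of Theorem~\ref{thm:Itocor} I would also record the key fact that $\lambda_u = \id$ for every $u \in B*C$ (this is precisely the statement that $B*C$ acts trivially on $A$).

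The core step is to show $A^{(3)} \subseteq B*C$. Since $A^{(3)} = A^{(2)}*A$ is additively generated by elements $x*a$ with $x \in A^{(2)}$ and $a \in A$, it suffices to bound each such generator. Write $x = u + v$ with $u \in B*C$ and $v \in C*B$. Because $\lambda_u = \id$, from~\eqref{eq:formulas} one has $u + v = u \circ \lambda_u^{-1}(v) = u \circ v$, hence $\lambda_x = \lambda_u \lambda_v = \lambda_v$, giving $x*a = v*a$. Next, decomposing $a = b + c$ with $b \in B$, $c \in C$ and applying Lemma~\ref{lem:calcbraces}(1), I obtain
\[
v*a = v*b + b + v*c - b.
\]
Since $v \in C*B \subseteq B$ and $B$ is trivial as a skew left brace, $v*b = 0$; since $v \in B$ and $c \in C$, the element $v*c$ lies in $B*C$; and since $(B*C, +)$ is a normal subgroup of $(A, +)$, the element $b + v*c - b$ remains in $B*C$. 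Thus $x*a \in B*C$, proving $A^{(3)} \subseteq B*C$.

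Finally, $A^{(4)} = A^{(3)} * A \subseteq (B*C) * A = 0$ because $\lambda$ is trivial on $B*C$, so $A$ is right nilpotent of class at most four. The main hurdle is the reduction $x*a = v*a$: once one notices that the $B*C$-component of $x \in A^{(2)}$ contributes nothing to $\lambda_x$, the remaining work is the short computation above, using Lemma~\ref{lem:calcbraces}(1), the triviality of $B$, and the normality of $B*C$ in $(A, +)$ guaranteed by the strong left ideal hypothesis on $B$.
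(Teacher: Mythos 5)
Your proof is correct, and it reaches the key containment $A^{(3)}\subseteq B*C$ by a different route than the paper. The paper first notes (from the proof of Theorem~\ref{thm:Itocor}) that $B*C$ is an \emph{ideal} of $A$, then argues by cases: if $B*C=0$ it gets $B*A=0$ and $A^{(2)}=C*B\subseteq B$, hence $A^{(3)}\subseteq B*A=0$; if $B*C\neq 0$ it passes to the quotient $A/B*C$, applies the first case there, and deduces $A^{(3)}\subseteq B*C$. You avoid both the quotient argument and any use of the multiplicative normality of $B*C$: writing $x=u+v$ with $u\in B*C\subseteq\Ker\lambda$ and $v\in C*B$, you observe $\lambda_x=\lambda_v$ (since $u+v=u\circ v$), and then a direct computation with Lemma~\ref{lem:calcbraces}(1), the triviality of $B$, and the additive normality of $B*C$ from Lemma~\ref{lem:hardworkfactoriz}(1) puts every generator $x*a$ inside $B*C$. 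Both arguments finish identically, using that $B*C$ acts trivially on $A$ (which, as you note, only needs $B$ strong), so $A^{(4)}\subseteq (B*C)*A=0$. Your version is more self-contained at the level of the corollary, since it does not need $B*C$ to be an ideal nor the verification that the factorization hypotheses descend to the quotient; the paper's version is shorter given Theorem~\ref{thm:Itocor} and makes explicit the sharper bound (class at most three) in the degenerate case $B*C=0$.
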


\begin{proof}
Notice that, by the proof of Theorem \ref{thm:Itocor}, $B*C$ is an ideal of $A$. Assume first that $B*C=0$. Then $B*A=0$. Moreover, 
$A^{(2)}=C*B\subseteq B$ by Lemma~\ref{lem:hardworkfactoriz}(3). Thus $A^{(3)}\subseteq B*A=0$. Assume now that $B*C\ne 0$.
Then the skew left brace $A/B*C$ is right nilpotent of class at most three by the previous case. Hence $A^{(3)} \subseteq B*C$. As, by the proof
of Theorem~\ref{thm:Ito2}, $B*C$ acts trivially on $A$, it follows that $A^{(4)}=0$.
\end{proof}

Our results on factorizations of skew left braces have applications to involutive non-degenerate solutions of the Yang--Baxter equation.
Let $(X,r)$ be an involutive non-degenerate solution and write $r(x,y)=(\sigma_x(y),\tau_y(x))$. On $X$ one considers the equivalence relation
given by \[x\sim y\iff \sigma_x=\sigma_y.\] Then $\sim$ induces an involutive non-degenerate solution on the set of equivalence classes of
$X$ known as the \emph{retraction} $\Ret(X,r)$ of $(X,r)$, see \cite{MR1722951}. One defines inductively:
\begin{align*}
    \Ret^0(X,r) & =(X,r),\\
    \Ret^1(X,r) & =\Ret(X,r),\\
    \Ret^n(X,r) & =\Ret(\Ret^{n-1}(X,r)),\quad n\geq 2.
\end{align*}

A solution $(X,r)$ is said to be \emph{multipermutation} of level $m$ if $\Ret^m(X,r)$ has size one and $\Ret^{m-1}(X,r)$ has more
than one element. Multipermutation solutions can be characterized in terms of left orderability of the structure group $G(X,r)$ of
$(X,r)$, see~\cite{MR3815290,MR3572046,MR2189580}. This group is defined as the group generated by the elements of $X$ with relations
\[
x\circ y=u\circ v\quad\text{whenever $r(x,y)=(u,v)$}.
\]
The group $G(X,r)$ is a left brace with $\circ$ as the multiplicative operation and the quotient left brace 
\[
\mathcal{G}(X,r)=G(X,r)/\Soc(G(X,r))
\]
has multiplicative group isomorphic to the subgroup of $\Sym(X)$ generated by all $\lambda_x$ for $x\in X$, for details see~\cite{MR3177933,CAGTA}.

It is known \cite[Proposition 6]{MR3574204} that a non-zero left brace $A$ is right nilpotent of class $m$ if and only if its
associated solution of the Yang--Baxter equation $(A,r_A)$ is a multipermutation solution of level $m-1$. It is also known
\cite[Theorem 5.15]{MR3861714} that an involutive non-degenerate (not necessarily finite) solution of the Yang--Baxter equation
$(X,r)$ with $|X|\geq 2$ is a multipermutation solution of level not exceeding $m$ provided the solution $(G,r_G)$, where $G=G(X,r)$,
is a multipermutation solution of level $m$. Both these results together with our Theorem~\ref{thm:Ito2} lead to an interesting
information on involutive non-degenerate solutions of the Yang--Baxter equation.

\begin{cor} 
\label{cor:multiperm}
Let $(X,r)$ be an involutive non-degenerate (not necessarily finite) solution of the Yang--Baxter equation with $|X|\geq 2$.
If the left brace $\mathcal{G}(X,r)$ admits a factorization through left ideals, which are trivial as left braces, then $(X,r)$ is a
multipermutation solution of level at most three.
\end{cor}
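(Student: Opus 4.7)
The plan is to apply Theorem~\ref{thm:Ito2} to the left brace $\mathcal{G}(X,r)$, then lift the resulting right nilpotency bound to the structure group $G=G(X,r)$, and finally invoke the two external results recalled in the paragraph preceding the corollary to deduce multipermutability of $(X,r)$ itself.

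First I would observe that, since $\mathcal{G}(X,r)$ is a \emph{left} brace, its additive group is abelian, so every subgroup of $(\mathcal{G}(X,r),+)$ is automatically normal; consequently every left ideal is a strong left ideal. The hypothesis therefore provides a factorization $\mathcal{G}(X,r)=B+C$ through strong left ideals that are trivial as left braces, and Theorem~\ref{thm:Ito2} immediately yields $\mathcal{G}(X,r)^{(3)}=0$.

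Next, I would transfer this bound to $G$ through the canonical brace epimorphism $\pi\colon G\to G/\Soc(G)=\mathcal{G}(X,r)$. Since $\pi$ preserves $*$ and is surjective, $\pi(G^{(n)})=\mathcal{G}(X,r)^{(n)}$ for every $n$, so $G^{(3)}\subseteq\Soc(G)$. As $\lambda_a=\id$ for every $a\in\Soc(G)$, the operation $*$ vanishes on $\Soc(G)\times G$, hence
\[
G^{(4)}=G^{(3)}*G\subseteq\Soc(G)*G=0,
\]
and $G$ is right nilpotent of class at most four. By~\cite[Proposition 6]{MR3574204} the associated solution $(G,r_G)$ is then multipermutation of level at most three, and \cite[Theorem 5.15]{MR3861714} transfers this bound to $(X,r)$, giving the desired conclusion.

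The only mildly delicate step is the one-step loss of right nilpotency when passing from $\mathcal{G}(X,r)$ back to $G$: one has to recognise that the socle, sitting inside $\Ker\lambda$, contributes exactly one extra trivial layer to the right series of $G$, which is precisely what keeps the final multipermutation level bounded by three rather than four. Everything else reduces to direct invocations of Theorem~\ref{thm:Ito2} and the cited results, with the verification that ``left ideal'' and ``strong left ideal'' coincide for left braces being the harmless step that lets us apply Theorem~\ref{thm:Ito2} in the first place.
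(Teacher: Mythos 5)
Your proposal is correct and follows essentially the same route as the paper: apply Theorem~\ref{thm:Ito2} to $\mathcal{G}(X,r)=G/\Soc(G)$ (using that left ideals of a left brace are automatically strong), pull the bound back to get $G^{(3)}\subseteq\Soc(G)$ and hence $G^{(4)}=0$, then cite \cite[Proposition 6]{MR3574204} and \cite[Theorem 5.15]{MR3861714}. Your explicit remarks on why left ideals are strong and why $\Soc(G)*G=0$ are details the paper leaves implicit, but the argument is identical.
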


\begin{proof}
Let $A=\mathcal{G}(X,r)$ and $G=G(X,r)$. Then Theorem~\ref{thm:Ito2} yields $A^{(m)}=0$ for some $m\leq 3$. Because $G/\Soc(G)\cong A$
as left braces, we get $G^{(m)}\subseteq\Soc(G)$, and thus $G^{(m+1)}=0$. Hence $G$ is a right nilpotent left brace of class at most four and,
by \cite[Proposition 6]{MR3574204}, $(G,r_G)$ is a multipermutation solution of  level at most three. Therefore, by~\cite[Theorem 5.15]{MR3861714}, 
$(X,r)$ is a multipermutation solution of level at most three.
\end{proof}

This shows that properties of the involutive non-degenerate set-theoretic solution $(X,r)$ are not completely determined by the
group theory of the additive and multiplicative groups of the left brace $\mathcal{G}(X,r)$. The following examples clarify this.

\begin{exa}
\label{exa:B(8,27)}
Let $X=\{1,2,3,4\}$ and $r(x,y)=(\sigma_x(y),\tau_y(x))$ be the irretractable involutive non-degenerate solution given by 
\begin{alignat*}{4}
    \sigma_1 & =(34),\qquad & \sigma_2 & =(1324),\qquad & \sigma_3 & =(1423),\qquad & \sigma_4 & =(12),\\
    \tau_1 & =(24),\qquad & \tau_2 & =(1432),\qquad & \tau_3 & =(1234),\qquad & \tau_4 & =(13).
\end{alignat*}
The associated left brace $\mathcal{G}(X,r)$ is $B(8,27)$ and has additive group $C_2^3$ and multiplicative group $D_8$. Furthermore,
$\mathcal{G}(X,r)$ is not right nilpotent. Hence it is impossible to decompose the left brace $\mathcal{G}(X,r)$ as in Corollary~\ref{cor:multiperm}. 
\end{exa}

\begin{exa}
The left brace $B(8,26)$ has the same additive and multiplicative groups as the brace $\mathcal{G}(X,r)$ 
of Example~\ref{exa:B(8,27)} but it has a factorization as in Corollary~\ref{cor:multiperm}. 
This shows that $B(8,26)$ is right nilpotent. 
\end{exa}

As a counterpart to Theorem~\ref{thm:Ito2}, an analog of It\^{o}'s theorem on metabelian groups \cite{MR0071426}, it is natural to ask if we can extend 
this analogy. A natural candidate is the celebrated Kegel--Wielandt theorem \cite{MR0133365,MR0224703,wiel1958}, which states that a finite group which 
is factorized by two nilpotent subgroups is solvable. In the following example we show that we cannot expect a naive analog of this theorem for skew 
left braces. Recall that a skew left brace is said to be \emph{simple} if is contains no non-trivial proper ideals. 

\begin{exa}\label{ex:Burn}
Consider the left brace $B(72,475)$, a simple left brace of size $72$. This left brace 
admits a factorization through non-trivial left braces $B$ and $C$ 
with $|B|=9$ and $|C|=8$ that are both left and right nilpotent (strong) left ideals. 
\end{exa}

Moreover, as $72=2^3 3^2$, Example~\ref{ex:Burn} shows also that the $p^aq^b$-theorem of Burnside does not have an analog for skew left braces. 
This also follows directly from work of Ced\'{o}, Jespers, and Okni\'nski \cite{CJO18}. As the construction of simple left braces has received a lot of 
attention \cite{CJO18, MR3763276, MR3812099} and because every left brace is a matched product of left ideals \cite{MR3812099,MR3763276} (if it is not 
of prime power order) we mention the following:

\begin{cor}
Let $A$ be a skew left brace. If $A$ is the matched product of trivial skew left braces $B$ and $C$, then $A$ cannot be a simple skew left brace. 
\end{cor}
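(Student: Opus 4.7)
The plan is to reduce the statement to Theorem~\ref{thm:Itocor} by exploiting the factorization structure inherent in the matched product. First I would recall, from the references cited immediately before the corollary, that the matched product $A = B \bowtie C$ of skew left braces furnishes a factorization $A = B + C$ with $B \cap C = 0$, where $B$ and $C$ are subbraces of $A$ and $|A| = |B| \cdot |C|$. For the statement to have content both $B$ and $C$ are assumed non-trivial, so in particular $|B|, |C| < |A|$.

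The main step is to verify that at least one of $B$ and $C$ is a strong left ideal of $A$, i.e.\ that one of $(B,+)$ or $(C,+)$ is normal in $(A,+)$. This should follow by direct inspection of the additive operation in the matched product construction of~\cite{MR3812099,MR3763276}: the additive group of $A$ is exhibited there as a (semidirect) product of $(B,+)$ and $(C,+)$ in which the normality of at least one factor is manifest from the defining formulas. I expect this verification, which amounts to unpacking the specific matched product formalism and reading off additive normality, to be the main obstacle.

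Granting this, say $B$ is a strong left ideal. Then $A = B + C$ is a factorization through a strong left ideal $B$ and a left ideal $C$, with both $B$ and $C$ trivial as skew left braces. Theorem~\ref{thm:Itocor} therefore produces a non-zero ideal $I$ of $A$ contained in $B$ or in $C$. Because $|B|, |C| < |A|$, the ideal $I$ is proper, and hence $A$ cannot be a simple skew left brace. As a fallback in case only left ideals (not strong) can be extracted from the matched product formalism, one may instead invoke the corollary immediately following Theorem~\ref{thm:Ito2} to obtain a meta-trivial ideal $I$ with $A/I$ trivial; under the simplicity assumption this forces $A$ itself to be trivial, and one then concludes by noting that a simple trivial skew left brace has prime-order additive group, contradicting $|A| = |B| \cdot |C|$ with $|B|, |C| \geq 2$.
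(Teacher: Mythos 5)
Your main argument is essentially the paper's own proof: the matched product construction makes $A=B+C$ a factorization through strong left ideals that are trivial as skew left braces (the additive group of the matched product is by construction the direct product of $(B,+)$ and $(C,+)$, so the normality you worry about is immediate, for both factors), and Theorem~\ref{thm:Itocor} then gives a non-zero ideal contained in the proper subset $B$ or $C$, contradicting simplicity. The only caveat is your fallback branch, which is both unnecessary and flawed: a simple \emph{trivial} skew left brace need not have prime-order additive group (any non-abelian simple group yields one); that claim is only valid for left braces with abelian additive group.
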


\begin{proof}
By definition of the matched product, it is clear that both $B$ and $C$ are strong left ideals of $A$ and that $A=B+C$.
Hence Theorem \ref{thm:Itocor} shows that $A$ will contain a non-trivial ideal.
\end{proof}

\begin{defn}
Let $A=B+C$ be a factorization of a skew left brace $A$ through left ideals $B$ and $C$.
A left ideal $I$ of $A$ is said to be \emph{factorized} if $I=(I\cap A)+(I\cap B)$.
\end{defn}

If $A$ is a skew left brace, then \[\Fix(A)=\{a\in A:\lambda_b(a)=a\text{ for all $b\in A$}\}\] is a left ideal of $A$. 

\begin{cor}
Let  $A=B+C$ be factorization of a skew left brace $A$ through left ideals $B$ and $C$.  If $B$ and $C$ are both trivial as skew left braces,
then $\Fix(A)$ is a factorized left ideal of $A$. 
\end{cor}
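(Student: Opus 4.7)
The plan is to show that $\Fix(A) \subseteq (\Fix(A)\cap B) + (\Fix(A)\cap C)$; the reverse inclusion is immediate because $\Fix(A)$ is closed under $+$ (each $\lambda_a$ is an additive homomorphism, so the fixed set of all the $\lambda_a$ is an additive subgroup). So I fix $f \in \Fix(A)$, use the hypothesis $A = B+C$ to write $f = b+c$ with $b\in B$, $c\in C$, and aim to prove that both $b$ and $c$ separately lie in $\Fix(A)$.

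The first step is to exploit triviality in a cheap way. Since $B$ is trivial as a skew brace, $\beta + x = \beta\circ x$ for $\beta, x\in B$, so $\lambda_\beta|_B = \id_B$; symmetrically $\lambda_\gamma|_C = \id_C$ for $\gamma\in C$. Since $B$ and $C$ are left ideals, $\lambda_\beta(c)\in C$ and $\lambda_\gamma(b)\in B$. For $\beta\in B$ I then compute
\[
b+c \;=\; f \;=\; \lambda_\beta(f) \;=\; \lambda_\beta(b)+\lambda_\beta(c) \;=\; b + \lambda_\beta(c),
\]
and additive cancellation gives $\lambda_\beta(c) = c$. The symmetric computation with $\gamma\in C$ gives $\lambda_\gamma(b) = b$.

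To pass from $B\cup C$ to all of $A$, I invoke the observation recorded at the start of Section~\ref{factorizations} that $A = B\circ C$. Given any $a\in A$, write $a = \beta\circ\gamma$ with $\beta\in B$, $\gamma\in C$; since $\lambda\colon(A,\circ)\to\Aut(A,+)$ is a homomorphism, $\lambda_a = \lambda_\beta\lambda_\gamma$, and hence
\[
\lambda_a(b) \;=\; \lambda_\beta(\lambda_\gamma(b)) \;=\; \lambda_\beta(b) \;=\; b,
\]
using the previous paragraph for the first equality and triviality of $B$ for the second. The analogous chain yields $\lambda_a(c) = c$. Thus $b\in \Fix(A)\cap B$ and $c\in \Fix(A)\cap C$, so $f$ lies in $(\Fix(A)\cap B)+(\Fix(A)\cap C)$, as required.

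I do not foresee a genuine obstacle here; the only conceptual subtlety is that the factorization $A = B+C$ need not be direct, so the decomposition $f = b+c$ is in general not unique. The argument above is robust against this: it proves that every such decomposition splits into fixed pieces, so no uniqueness is needed. The key tool that makes everything work is the joint use of the left-ideal condition (to trap $\lambda_\beta(c)$ in $C$ and $\lambda_\gamma(b)$ in $B$) together with the triviality of $B$ and $C$ (to kill the diagonal action).
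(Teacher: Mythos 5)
Your proposal is correct and follows essentially the same route as the paper: write $f=b+c$, use triviality of $B$ (resp.\ $C$) plus additive cancellation to get $\lambda_\beta(c)=c$ and $\lambda_\gamma(b)=b$, then use $A=B\circ C=C\circ B$ and the homomorphism property of $\lambda$ to promote this to all of $\Fix(A)$. The only differences are cosmetic: you spell out the symmetric half instead of appealing to symmetry, and you record the (trivial) reverse inclusion explicitly.
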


\begin{proof}
Suppose $b+c \in\Fix(A)$ for some $b\in B$ and $c\in C$. Then, for any $\beta \in B$, we have $b+c=\lambda_\beta(b+c)=b+\lambda_\beta(c)$ because 
$\lambda_{\beta}$ is a homomorphism and $B$ is a trivial skew left brace. Hence $\lambda_{\beta}(c) = c$. Because $A=C\circ B$ and $C$ is a trivial
skew left brace, it follows that $c \in \Fix(A) \cap C$. By symmetry, we have that $b \in \Fix(A) \cap B$.
\end{proof}

Recall that a \emph{left brace} is a skew left brace with
abelian additive group. For left braces, $\Soc(A)=\Ker\lambda$. 

\begin{cor}
\label{cor:Soc_factorized}
Let $A$ be a left brace. If $A=B+C$ is a factorization through the left ideals $B$ and $C$,
where both are trivial as skew left braces, then $\Soc(A)$ is a factorized ideal of $A$.
\end{cor}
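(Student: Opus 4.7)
The plan is to pick an arbitrary element $a\in\Soc(A)$, write $a=b+c$ with $b\in B$ and $c\in C$, and show that both $b$ and $c$ lie in $\Soc(A)$. Since $(A,+)$ is abelian, the center $\Cen(A,+)=A$, so the condition $\Soc(A)=\Ker\lambda$ mentioned just before the statement reduces the task to proving $\lambda_b=\lambda_c=\id_A$.

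First I would convert the additive decomposition of $a$ into a multiplicative one. By~\eqref{eq:formulas},
\[
a=b+c=b\circ\lambda_b^{-1}(c).
\]
Set $c_1=\lambda_b^{-1}(c)$; because $C$ is a left ideal and hence $\lambda$-invariant, we have $c_1\in C$. The assumption $\lambda_a=\id$ becomes $\lambda_b\circ\lambda_{c_1}=\id$, that is, $\lambda_{c_1}=\lambda_b^{-1}$.

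Next I would exploit that $B$ and $C$ are trivial as skew braces. This forces $b_1*b_2=0$ for all $b_1,b_2\in B$, i.e.\ $\lambda_b|_B=\id_B$, and similarly $\lambda_{c_1}|_C=\id_C$. Restricting the equality $\lambda_{c_1}=\lambda_b^{-1}$ to $B$ gives $\lambda_{c_1}|_B=\id_B$, and restricting to $C$ gives $\lambda_b|_C=\id_C$. Now $\lambda_b$ is an additive automorphism of $A$ and every element of $A$ is a sum of an element of $B$ and an element of $C$, so these two restrictions combine to $\lambda_b=\id_A$. Therefore $b\in\Soc(A)$. Consequently $c=\lambda_b(c_1)=c_1$, whence $\lambda_c=\lambda_{c_1}=\lambda_b^{-1}=\id$, so $c\in\Soc(A)$ as well. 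This proves $\Soc(A)=(\Soc(A)\cap B)+(\Soc(A)\cap C)$.

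The argument is quite short, and the only conceptual step is the passage from the single identity $\lambda_{b+c}=\id$ to separate identities for $\lambda_b$ and $\lambda_{c_1}$; this is carried out by rewriting the sum as a circle product and then playing off the two triviality conditions on $B$ and $C$ against each other. No deeper machinery appears to be needed, and the abelianity of $(A,+)$ is used solely to guarantee $\Soc(A)=\Ker\lambda$.
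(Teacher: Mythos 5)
Your proof is correct and follows essentially the same route as the paper: rewrite $b+c$ as $b\circ\lambda_{b'}(c)$, use triviality of $C$ to see that $\lambda_b$ fixes $C$ pointwise, combine with triviality of $B$ to get $\lambda_b=\id_A$, and conclude $b\in\Ker\lambda=\Soc(A)$ and hence $c\in\Soc(A)$. The only cosmetic difference is that you phrase the key step as the operator identity $\lambda_{c_1}=\lambda_b^{-1}$ and restrict, whereas the paper evaluates the same chain of equalities directly on an element $\gamma\in C$.
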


\begin{proof}
Let $s \in \Soc(A)$. Then $s=b+c$, where $b \in B$ and $c\in C$. We show that $b\in\Ker\lambda$. For that purpose,
let $\gamma \in C$. As $C$ is a left ideal and trivial as skew left braces, 
\[
\lambda_{b}(\gamma) = \lambda_{b\circ \lambda_{b'}(c)}(\gamma) = \lambda_{b+c}(\gamma) = \lambda_s(\gamma)=\gamma. 
\]
Hence $b$ acts trivially on both $B$ and $C$ and therefore $b \in \Ker \lambda=\Soc(A)$. This implies that $c=s-b\in\Soc(A)$. 
\end{proof}

In~\cite{MR1178147}, it is shown that exact factorizations of groups produce non-trivial solutions of the Yang--Baxter equation.
Such factorizations produce skew left braces: let $G$ be a finite additive group with an exact factorization through subgroups $B$ and $C$,
i.e., $G=B+C$. The operation 
\[
x\circ y=b+y+c,
\]
where $x=b+c$, turns the group $G$ into a skew left brace 
with multiplicative group isomorphic to $B\times C$ and additive group isomorphic to $G$, see~\cite[Theorem 2.3]{MR3763907}.
We say that $(G,+,\circ)$ is the skew left brace obtained from this exact factorization.

\begin{pro}
\label{propexactfact}
    Let $A$ be a skew left brace obtained from an exactly factorizable group $A=B+C$. 
    The following statements hold:
    \begin{enumerate}
        \item $C$ is a left ideal of $A$.
        \item If $(C,+)$ is normal in $(A,+)$, then $C$ is an ideal of $A$.
        \item If $(B,+)$ is normal in $(A,+)$, then $B$ is an ideal of $A$. 
    \end{enumerate}
\end{pro}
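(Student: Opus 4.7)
The plan is to first unpack the action of $\lambda$ in this concrete skew brace. For any $a=b+c\in A$ with $b\in B$ and $c\in C$, I would use the definition $a\circ y=b+y+c$ together with $-a=-c-b$ to compute
\[
\lambda_a(y)=-a+a\circ y=(-c-b)+(b+y+c)=-c+y+c.
\]
The crucial observation is that this is simply additive conjugation by $-c$; in particular $\lambda_a$ depends only on the $C$-component of $a$. From this formula, (1) is immediate: for $y\in C$, $\lambda_a(y)=-c+y+c$ lies in $C$ because $(C,+)$ is a subgroup of $(A,+)$, so $C$ is closed under every $\lambda_a$ and, being a subgroup of $(A,+)$, is a left ideal.

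Next I would pin down the structure of $(A,\circ)$ in terms of $B$ and $C$. Using the formula $x\circ y=b+y+c$ with $x=b+c$, a short calculation shows that for $b\in B$ and $c\in C$ one has $b\circ c=c\circ b=b+c$, that $(B,\circ)$ coincides with $(B,+)$ as a group, and that $(C,\circ)$ is isomorphic to $(C,+)$. Combined with the exact factorization, this means $(A,\circ)$ is the internal direct product $(B,\circ)\times(C,\circ)$. Consequently both $(B,\circ)$ and $(C,\circ)$ are automatically normal in $(A,\circ)$, independently of any additive hypothesis.

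For (2) I combine these two ingredients: the assumption that $(C,+)\trianglelefteq (A,+)$ together with (1) and the just-mentioned fact that $(C,\circ)\trianglelefteq(A,\circ)$ gives all three conditions required to be an ideal. For (3), first note that $B$ is not automatically a left ideal; but the explicit formula $\lambda_a(y)=-c+y+c$ shows that whenever $(B,+)\trianglelefteq(A,+)$, we have $\lambda_a(B)\subseteq B$, so $B$ is a left ideal. Since $(B,\circ)\trianglelefteq(A,\circ)$ by the direct product structure, and $(B,+)\trianglelefteq(A,+)$ by hypothesis, $B$ is an ideal.

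There is no real obstacle here; the content of the argument is the explicit computation of $\lambda_a$ and the recognition that $(A,\circ)$ is an internal direct product of $B$ and $C$. The only thing to watch is that the subtle asymmetry between $B$ and $C$ in (1) — namely that $C$ is always a left ideal while $B$ requires additive normality — comes directly from the fact that $\lambda_a$ only involves the $C$-component of $a$, which is why the assumption in (3) is really needed even at the left ideal level.
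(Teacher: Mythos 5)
Your argument is correct and follows essentially the same route as the paper: the key step in both is the computation $\lambda_{b+c}(y)=-c+y+c$, from which (1) is immediate and (3) follows once $(B,+)$ is assumed normal. The only difference is that you explicitly verify that $(A,\circ)$ is the internal direct product of $(B,\circ)$ and $(C,\circ)$ to get multiplicative normality of $B$ and $C$, a fact the paper leaves implicit by citing that the multiplicative group is isomorphic to $B\times C$.
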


\begin{proof}
    Let $a=b+c$ with $b\in B$ and $c\in C$. To prove that $C$ is a left ideal
    let $\gamma\in C$. Then
    \begin{align*}
         \lambda_a(\gamma) & =\lambda_{b+c}(\gamma)\\
                           & =-(b+c)+(b+c)\circ\gamma\\
                           & =-c-b+b+\gamma+c\\
                           & =-c+\gamma+c\in C.
    \end{align*}
    Now let $\beta\in B$. Since
    \begin{align*}
    \lambda_a(\beta) & =\lambda_{b+c}(\beta)\\
                     & =-(b+c)+(b+c)\circ\beta\\
                     & =-c-b+b+\beta+c\\
                     & =-c+\beta+c,
    \end{align*}
    the second claim follows.
\end{proof}

\begin{cor}
Let $A$ be a skew left brace obtained from an exactly factorizable additive group, say $A=B+C$. Assume that both
$(B,+)$ and $(C,+)$ are normal in $(A,+)$. If $(C,+)$ is abelian, then $A$ is right nilpotent of class at most three.
In particular, its associated solution is a multipermutation solution of level at most three.
\end{cor}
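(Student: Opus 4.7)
The plan is to reduce the statement to Theorem~\ref{thm:Ito2} by verifying that $B$ and $C$ are strong left ideals of $A$ that are trivial as skew left braces. For the ideal condition, Proposition~\ref{propexactfact}(1) already gives that $C$ is a left ideal, and parts (2) and (3) of the same proposition, combined with the normality hypotheses on $(B,+)$ and $(C,+)$ in $(A,+)$, upgrade both $B$ and $C$ to full ideals of $A$. In particular, both are strong left ideals, which is what Theorem~\ref{thm:Ito2} demands on that side.

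Next I would verify triviality as skew left braces by directly using the explicit formula $x\circ y = b + y + c$ valid for $x = b + c$ with $b \in B$, $c \in C$. For $\beta_1, \beta_2 \in B$, uniqueness of the exact factorization forces $\beta_1 = \beta_1 + 0$, so $\beta_1 \circ \beta_2 = \beta_1 + \beta_2 + 0 = \beta_1 + \beta_2$; hence $B$ is automatically trivial, with no extra hypothesis needed. For $\gamma_1, \gamma_2 \in C$, writing $\gamma_1 = 0 + \gamma_1$ gives $\gamma_1 \circ \gamma_2 = 0 + \gamma_2 + \gamma_1 = \gamma_2 + \gamma_1$, which coincides with $\gamma_1 + \gamma_2$ precisely because $(C,+)$ is assumed abelian. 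Thus both $B$ and $C$ are trivial skew left braces.

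With these two checks in hand, Theorem~\ref{thm:Ito2} applies directly and yields that $A$ is right nilpotent of class at most three. The ``in particular'' assertion concerning the associated multipermutation solution then follows from the same correspondence between right nilpotency and multipermutation level that is invoked in the proof of Corollary~\ref{cor:multiperm}. There is essentially no serious obstacle: the only mildly subtle point is to notice that the hypothesis is genuinely asymmetric, since $B$ turns out to be trivial for free from the shape of the $\circ$-formula, while $C$ is trivial if and only if $(C,+)$ is abelian, which is exactly why only that single commutativity assumption is required.
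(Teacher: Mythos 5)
Your proposal is correct and follows essentially the same route as the paper: invoke Proposition~\ref{propexactfact} together with the normality hypotheses to see that $B$ and $C$ are (strong left) ideals, read off triviality of $B$ from the formula $x\circ y=b+y+c$ and triviality of $C$ from commutativity of $(C,+)$, and then apply Theorem~\ref{thm:Ito2}. The handling of the ``in particular'' clause via the correspondence used in Corollary~\ref{cor:multiperm} matches the paper's intent as well.
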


\begin{proof}
    By Proposition \ref{propexactfact}, $B$ and $C$ are in particular strong left ideals of $A$.
    The definition of the circle operation implies that $B$ is a trivial skew left brace. Since $(C,+)$ is abelian, 
    $c_1\circ c_2=0+c_2+c_1=c_1+c_2$ for all $c_1,c_2\in C$ and hence $C$ is a trivial skew left brace.
    By Theorem~\ref{thm:Ito2}, $A$ is right nilpotent of class at most three. 
\end{proof}


\section{Characteristic ideals}
\label{characteristic_simple}

We denote by $\BrAut(A)$ the automorphism group of a skew left brace $A$.

\begin{defn}
Let $A$ be a skew left brace. An ideal $I$ of $A$ is called \emph{characteristic} if $\sigma(I) = I$ for all $\sigma\in\BrAut(A)$. 
A skew left brace $A$ is called \emph{characteristically simple} if $0$ and $A$ are its only characteristic ideals.
\end{defn}
As for finite groups, characteristically simple finite skew left braces can be described as a factorization
into copies of a simple skew left subbrace.

\begin{thm} 
\label{thm:charsimple}
Let $A$ be a finite skew left brace. Then $A$ is characteristically simple if and only if there exists a simple skew left brace $S$
and a positive integer $n$ such that $A \cong S^n$. Moreover, $A$ is solvable if and only if $S$ is trivial, and in this case $A$ is trivial.
\end{thm}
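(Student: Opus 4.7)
My plan is to prove each implication of the equivalence separately and then handle the solvability statement. In the forward direction I will extract a minimal ideal and spread it by automorphisms; in the reverse direction I will use projections and coordinate permutations to analyze characteristic ideals of $S^n$.

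For the forward implication, suppose $A$ is characteristically simple. Since $A$ is finite, it contains a minimal non-zero ideal $M$, and $\sum_{\sigma\in\BrAut(A)}\sigma(M)$ is a non-zero characteristic ideal, hence equals $A$. I would then greedily pick $\sigma_1=\id,\sigma_2,\ldots,\sigma_n$ so that each $M_i=\sigma_i(M)$ is not contained in $M_1+\cdots+M_{i-1}$; minimality of $M_i$ makes $M_i\cap(M_1+\cdots+M_{i-1})=0$, and by finiteness the process ends with $A=M_1\oplus\cdots\oplus M_n$. The key auxiliary lemma---for ideals $I,J$ of a skew brace with $I\cap J=0$ one has $I*J=J*I=0$ and $(I,+),(J,+)$ and $(I,\circ),(J,\circ)$ commute elementwise (the crucial step being $i*j\in I$, which follows from applying the quotient $A\to A/I$ and observing $\pi(i*j)=0*\pi(j)=0$)---then upgrades this inner direct sum to a skew brace isomorphism $A\cong M^n$. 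To see $M$ is simple, I observe that any ideal $N$ of $M=M_1$ is automatically an ideal of $A$: the other summands $M_j$ act trivially on $M_1$ both via $\lambda$ and by conjugation in either group operation, so the $A$-action on $M_1$ factors through $M_1$ itself and preserves $N$. Minimality of $M_1$ in $A$ then forces $N\in\{0,M\}$.

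For the reverse implication, let $A=S^n=S_1\oplus\cdots\oplus S_n$ with $S$ simple and $N$ a non-zero characteristic ideal. Coordinate permutations lie in $\BrAut(A)$, so the ideals $N\cap S_i$ are all isomorphic; each is an ideal of $A$ inside $S_i\cong S$, hence $0$ or $S_i$ by simplicity. If some $N\cap S_i=S_i$, permutation invariance yields $N=A$. Suppose instead $N\cap S_i=0$ for all $i$. When $S$ is non-trivial I would argue by induction on $n$: the kernel of $\pi_1|_N$, namely $N\cap(S_2\oplus\cdots\oplus S_n)$, is again an ideal of $A$ meeting each summand trivially, so by induction it vanishes; hence $\pi_1|_N$ is bijective and $N$ has the form $\{(x,\phi_2(x),\ldots,\phi_n(x)):x\in S\}$ for brace automorphisms $\phi_i$ of $S$. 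Additive normality of $N$ in $(A,+)$ then forces $(S,+)$ abelian, and $\lambda$-invariance forces $\phi_i(x)\in\Fix(S)$ for every $x$; since $\Fix(S)$ is a left ideal of the simple brace $S$ and $\phi_i$ is surjective, $\Fix(S)=S$ and $S$ is trivial, a contradiction. When $S$ is trivial, $A$ is itself trivial, so characteristic ideals of $A$ coincide with characteristic subgroups of $(A,+)$, and the classical theorem that a direct power of a finite simple group is characteristically simple as a group forces $N=A$, contradicting $N\cap S_i=0$. This case-split on triviality of $S$ is the main obstacle in the argument.

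For the solvability statement, if $S$ is non-trivial then $S*S\ne 0$ (otherwise $S$ would be trivial); since $S^{(2)}=S*S$ is an ideal of $S$ and $S$ is simple, $S^{(2)}=S$, and componentwise $A^{(2)}=A$, so the right series of $A$ is stationary at $A$ and $A$ cannot be solvable. Conversely, if $S$ is trivial then $+$ and $\circ$ agree on each factor and hence on $A$, so $A$ is itself a trivial skew brace (with $A*A=0$), in particular solvable. This establishes the ``moreover'' clause.
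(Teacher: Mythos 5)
Your proposal is correct, and its forward direction is essentially the paper's: take a minimal ideal, spread it around by $\BrAut(A)$, and use characteristic simplicity plus finiteness to obtain $A=M_1\oplus\dots\oplus M_n$ with each $M_i$ a copy of $M$. You are in fact more careful than the paper here: your auxiliary lemma that ideals $I,J$ with $I\cap J=0$ satisfy $I*J=J*I=0$ and commute elementwise in both group operations is exactly what is needed to upgrade the internal sum to a skew brace isomorphism $A\cong M^n$ and to see that an ideal of a direct factor is an ideal of $A$; the paper asserts both without comment. Where you genuinely diverge is the reverse direction. The paper argues elementwise: given $a\in I$ with $a_k\neq 0$ it perturbs $a$ by $\lambda_c$ or by additive/multiplicative conjugation with $c$ supported in coordinate $k$, lands a nonzero element of $I$ in that single coordinate, concludes $I$ is a sum of full components, and finishes with coordinate permutations. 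You instead prove by induction that a nonzero ideal meeting every coordinate trivially must be a graph $\{(x,\phi_2(x),\dots,\phi_n(x))\}$ of brace automorphisms, and then normality and $\lambda$-invariance force $S$ to be trivial, the trivial case being delegated to the classical fact that a power of a finite simple group is characteristically simple as a group. Your route is longer, but it cleanly isolates the degenerate case in which $S$ is the trivial brace on a cyclic group of prime order, where the paper's perturbation step produces nothing new (every element is fixed by all three actions) and where coordinate permutations alone would not exclude a diagonal; so your explicit case split buys completeness, while the paper's argument buys brevity. Two small points: state your induction hypothesis as being about arbitrary ideals of $S^m$ meeting all coordinate summands trivially (your kernel $N\cap(S_2\oplus\dots\oplus S_n)$ is only known to be an ideal, not characteristic), which is indeed what your argument proves; and in the solvability part the operative fact is $A*A=A$, so no derived-type series can reach $0$ --- the reference to the right series (which governs right nilpotency) and the remark that $\Fix(S)$ is a left ideal are immaterial, since surjectivity of $\phi_i$ alone yields $\Fix(S)=S$. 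The paper instead passes solvability to the subbrace $S$; both arguments are sound under any standard notion of a solvable skew brace.
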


\begin{proof}
Let $I$ be a minimal ideal of $A$. Let $I_1 = I$. If $I_1,\dotsc,I_j$ are defined, then define $I_{j+1}= \sigma(I)$ for some $\sigma\in\BrAut(A)$
such that $\sigma(I) \nsubseteq I_1 +\dotsb+ I_j$. As $A$ is finite and characteristically simple, it follows that this procedure stops and there
exists a positive integer $n$ such that $A= I_1+\dotsb+I_n$. As $I_j \cap I_k = 0$ for $j \neq k$, it follows that $A\cong I_1\times\dotsb\times I_n$. 
Moreover, for any $1\leq k \leq n$, it clearly holds that $I_k \cong I$. Thus $A \cong I^n$. Let $J$ be an ideal of $I$. As $I$ is a direct factor of 
$A$, it follows that $J$ is an ideal of $A$. Hence either $J =I$ or $J = 0$. Thus $I$ is a simple skew left brace.

Let $S$ be a simple skew left brace and $n$ be a positive integer. Let $A=S^n$ and $I$ be a non-zero characteristic ideal of $A$. Then the projection
on the $k$-th component $\pi_k\colon A \to S$ is a surjective skew left brace homomorphism. Hence $\pi_k(I)$ is an ideal in $S$. Thus either
$\pi_k(I)=S$ or $\pi_k(I) = 0$. Suppose that $\pi_k(I) = S$. Let $a=(a_1,\dotsc,a_n) \in I$ with $a_k \neq 0$. Then there exists an $s \in S$ such
that $\lambda_s(a_k) \neq 0$ or $s+a_k-s \neq 0$ or $s \circ a_k \circ s' \neq 0$. Denote by $c$ the element of $A$ where every entry is $0$ except
the $k$-th which is $s$. Then $\lambda_c(a) \neq a$ or $c+a-c \neq a$ or $c \circ a \circ c' \neq a$. Denote one of the elements different from $a$
by $d$. Then $d-a \neq 0$. Since $d-a$ only has a non-trivial element in the $k$-th component, it follows that the ideal generated by this element
is the full $k$-th component. This shows that $I=C_{j_1}+\dotsb+C_{j_t}$, where $C_l$ denotes the full $l$-th component. As the symmetric group
$S_n$ acts on $A$ by permuting the indices and $I$ is a characteristic ideal, it follows that $I = A$.

Let $A$ be a solvable and characteristically simple skew left brace. As $S$ is a skew left subbrace of $A$, it follows that $S$ is a solvable
simple skew left brace. Hence $S$ is a trivial skew left brace. On the other hand, the direct product of trivial skew left braces is trivial,
so $A$ is trivial (and thus certainly solvable).
\end{proof}

\subsection*{Acknowledgments}
The first author is supported in part by Onderzoeksraad of Vrije Universiteit Brussel and Fonds voor Wetenschappelijk
Onderzoek (Flanders), grant G016117. The second author is supported by Fonds voor Wetenschappelijk Onderzoek (Flanders),
grant G016117. The third author is supported by Fonds voor Wetenschappelijk Onderzoek (Flanders), via an FWO Aspirant-mandate.
The fourth author is supported in part by PICT 2016-2481 and UBACyT 20020170100256BA.
Vendramin acknowledges the support of NYU-ECNU Institute of Mathematical Sciences at NYU Shanghai.

\bibliographystyle{abbrv}
\bibliography{refs}

\end{document}